\newtheorem{theo}{Theorem}[section]
\newtheorem{prop}[theo]{Proposition}
\newtheorem{corol}[theo]{Corollary}
\theoremstyle{remark}
\newtheorem{rem}[theo]{Remark}
\newcommand{\ve}{\varepsilon}
\newcommand{\vp}{\varphi}
\newcommand{\Lg}{\mathcal{L}_g}
\newcommand{\RR}{\mathbb{R}}
\numberwithin{equation}{section}
\title{The Einstein-scalar field constraint system\\
in the positive case.}
\author{Bruno Premoselli}
\begin{document}

\maketitle

\begin{abstract}
We prove the existence of solutions to the conformal Einstein-scalar constraint system of equations for closed compact Riemannian manifolds in the positive case. Our results apply to the vacuum case with positive cosmological constant and to the massive Klein-Gordon setting. 
\end{abstract}

\section{Introduction}

\subsection{The Einstein constraint equations in a scalar-field theory and the conformal method.}

The constraint equations arise in general relativity. A space-time is a Lorentzian manifold $(\tilde{M}, \tilde{g})$ of dimension $n+1$ that solves the Einstein equations:
\begin{equation} \label{Ein}
\textrm{Ric}_{ij}(\tilde{g}) - \frac{1}{2}R(\tilde{g}) \tilde{g}_{ij} = 8 \pi T_{ij} 
\end{equation}
where $R(\tilde{g})$ and $\textrm{Ric}(\tilde{g})$ are respectively the scalar curvature and the Ricci curvature of $\tilde{g}$ and 
$T_{ij}$ is the stress-energy tensor. 
In a scalar-field theory the expression of $T$ involves a scalar field $\psi \in C^\infty(\tilde{M})$, a potential $V \in C^\infty(\RR)$, and the metric $\tilde{g}$, and is written as
\[ T_{ij} = \nabla_i \psi \nabla_j \psi - \left( \frac{1}{2}|\nabla \psi|_{\tilde{g}}^2 + V(\psi) \right)\tilde{g}_{ij} .\]
This setting covers several usual physical cases: for instance $V =0, \psi =0$ yields the vacuum constraint equations, $V \equiv \Lambda $ and $\psi = 0$ yields the vacuum case with a cosmological constant, and $V(\psi) = \frac{1}{2}m \psi^2$ yields the massive Klein-Gordon setting. An initial data set for the Einstein equations consists of 
$(M,g,K,\psi,\tilde{\pi})$, where $(M,g)$ is a $n$-dimensional closed compact Riemannian manifold ($n \ge 3$), $k$ is a symmetric $(2,0)$-tensor, and $\psi $ and $\tilde{\pi}$ are smooth functions in $M$. 
The Cauchy problem in general relativity deals with constructing space-time developments for given initial data sets $(M,g,K,\psi,\tilde{\pi})$. Such a development consists of a Lorentzian manifold $(M \times \mathbb{R},\tilde{g})$ and of a smooth function $\tilde{\psi}$ on $M \times \mathbb{R}$ such that $(M \times \mathbb{R},\tilde{g})$ solves the Einstein equations \eqref{Ein}, $g$ is the Riemannian metric induced by $\tilde{g}$ on $M$, $K$ is the second fundamental form of the embedding $M \subset M \times \RR$ and $\psi$ and $\tilde{\pi}$ are respectively the scalar field and its temporal derivative on $M$, that is $\tilde{\psi}_{|M} = \psi$ and $\partial_t \tilde{\psi}_{| M} = \tilde{\pi}$.
A necessary condition for the existence of a space-time development of an initial data set $(M,g,K,\psi,\tilde{\pi})$ is found applying the Gauss and Codazzi equations to \eqref{Ein} and yields the following well-known system of equations:
\begin{equation} \label{sys:cont}
\left \{ 
\begin{aligned}
& R(g) + \textrm{tr}_g K ^2  - \left| \left| K \right| \right|_{g}^2  = \tilde{\pi}^2 + |\nabla \psi|_{g}^2 + 2V(\psi)~,\\
&  \partial_i (\mathrm{tr}_{g}K) - K_{i,j}^j   = \tilde{\pi} \partial_i \psi~, \\
\end{aligned} \right. \end{equation}
where $R(g)$ is the scalar curvature of $(M,g)$. As shown first by Choquet-Bruhat \cite{ChoBru} for the vacuum case ($\psi = \tilde{\pi} = 0$) and later by Choquet-Bruhat-Isenberg-Pollack in the general case \cite{ChoIsPo2},  the system \eqref{sys:cont} is also a sufficient condition on $(M,g,K,\psi,\tilde{\pi})$ for the existence of a space-time development. 
This development is unique as shown by Choquet-Bruhat and Geroch \cite{ChoGe}. A survey reference on the subject is Chru\'sciel-Galloway-Pollack \cite{ChruGaPo}.

\medskip
Solving \eqref{sys:cont} provides admissible initial data for the Einstein equations. A method that turned out to be very effective to solve \eqref{sys:cont} is the conformal method, initiated by Lichnerowicz \cite{Lich}. It consists in turning \eqref{sys:cont} into a determined system by specifying some initial ``free'' data and to solve the system in the remaining ``determined'' initial data. The set of free data consists of $(\psi,\tau,\pi,U)$ where $\psi,\tau,\pi$ are smooth functions in $M$ and $U$ is a smooth symmetric traceless and divergence-free $(2,0)$-tensor in $M$. 
Given $( \psi, \tau, \pi, U)$ an initial free data set, the conformal method yields a system of two equations, often referred to as the conformal constraint system 
of equations, whose unknowns are a smooth positive function $\vp$ in $M$ and a smooth vector field $W$ in $M$. The conformal constraint system  is written as
\begin{numcases}{}
    \triangle_g \varphi + \mathcal{R}_\psi  \varphi  = \mathcal{B}_{\tau, \psi, V} \varphi^{2^*-1} + \frac{\mathcal{A}_{\pi, U}(W)}{ \varphi^{2^*+1}}~,  \label{EL} \\ 
  \triangle_{g, conf} W  = \frac{n-1}{n}\varphi^{2^*} \nabla\tau - \pi\nabla \psi~,    \label{M}
  \end{numcases} 
where we have let:
\begin{equation} \label{expressions}
\begin{aligned}
& \mathcal{R}_{\psi}  = c_n \left( R(g) - |\nabla \psi|_g^2 \right), \\
& \mathcal{B}_{\tau,\psi,V}  = c_n \left( 2 V(\psi) - \frac{n-1}{n} \tau^2 \right), \\
& \mathcal{A}_{\pi,U}(W)  = c_n \left(  |U + \mathcal{L}_g W |_g^2 + \pi^2 \right) ,\\
\end{aligned} \end{equation}
and $c_n = \frac{n-2}{4(n-1)}$. The notation $\mathcal{A}_{\pi,U}(W)$ emphasizes the dependency with respect to $W$, which is given by the second equation. 
In \eqref{expressions} we adopt similar notations to those in Choquet-Bruhat, Isenberg and Pollack \cite{ChoIsPo} except for the minus sign on $\mathcal{B}_{\tau, \psi,V}$. Also, in \eqref{EL}-\eqref{M}, 
$\triangle_g = - \textrm{div}_g \nabla$ is the Laplace-Beltrami operator, with nonnegative eigenvalues, $2^* = \frac{2n}{n-2}$ is the critical Sobolev exponent, $\triangle_{g, conf}W = \textrm{div}_g (\mathcal{L}_g W)$ and $\mathcal{L}_g W $ is the symmetric trace-free part of $\nabla W$:
\begin{equation} \label{conflie}
  \mathcal{L}_gW_{ij} = W_{i,j} + W_{j,i} - \frac{2}{n} \mathrm{div}_g W g_{ij}. 
\end{equation}
The first equation is referred to as the Einstein-Lichnerowicz equation while the second one is referred to as the momentum constraint. Smooth vector fields in the kernel of $\mathcal{L}_g$ are called conformal Killing vector fields.  
Since $M$ is compact without boundary, the integration by parts formula gives, for any smooth vector field $W$:
\[ \triangle_{g,conf} W = 0 \iff \mathcal{L}_g W = 0. \] 
Given an initial data set $(\psi, \tau, \pi, U)$, if $(\vp, W)$ solves \eqref{EL}-\eqref{M} then
\begin{equation} \label{initialdata}
\left( M, \vp^{\frac{4}{n-2}}g, \frac{\tau}{n} \vp^{\frac{4}{n-2}} g + \vp^{-2}(U + \mathcal{L}_g W), \psi ,\vp^{-\frac{2n}{n-2}}\pi \right)
\end{equation}
is a solution of \eqref{sys:cont}. In this case $\tau$ is the mean curvature of $(M,  \vp^{\frac{4}{n-2}}g)$ embedded in its space-time development, $\psi$ is the scalar-field restricted to $M$ and, up to a conformal factor, $\pi$ is the time derivative of the scalar-field in $M$. We refer to Choquet-Bruhat, Isenberg and Pollack \cite{ChoIsPo} and Bartnik-Isenberg \cite{BarIse} for more developments on the conformal method.  Solving the constraint system in usual cases such as the massive Klein-Gordon setting or the positive cosmological constant case amounts to solve \eqref{EL}-\eqref{M} for a good choice of the initial data.

\subsection{Statement of the results}

In this paper we focus on the conformal constraint system \eqref{EL}-\eqref{M} in the case of a non-negative potential $V$.
If $h$ is a smooth function in $M$, $\triangle_g + h$ is said to be coercive if there exists a positive constant $C$ such that for any $u \in H^1(M)$,
\[  \int_M \left( |\nabla u|_g^2 + hu^2 \right) dv_g \geqslant C ||u||_{H^1(M)}^2 \]
or, equivalently, if
\begin{equation} \label{Nh}
\Vert u \Vert_{H^1_h} =  \left( \int_M \left( |\nabla u|_g^2 + hu^2 \right) dv_g \right)^{\frac{1}{2}}
\end{equation}
is an equivalent norm on $H^1(M)$. In this case, following Hebey, Pacard and Pollack \cite{HePaPo}, we define the constant $S_h$ to be the smallest positive constant satisfying 
that for all $u \in H^1(M)$:
\begin{equation} \label{Sh}
\Vert u \Vert_{L^{2^*}} \le S_{h}^{\frac{1}{2^*}} \Vert u \Vert_{H^1_h}.
\end{equation}
Our main result states the existence of a solution $(\vp, W)$ of \eqref{EL}-\eqref{M} in the positive case under suitable smallness assumptions on the free data. It is stated as follows.
\begin{theo} \label{Th}
Let $(M,g)$ be a closed compact Riemannian manifold of dimension $n \ge 3$ of positive Yamabe type such that $g$ has no conformal Killing vector fields. Let $V$ be a smooth nonnegative function on $\RR$, $V \not \equiv 0$,  
and let  $\psi$ be a smooth function in $M$ such that the operator $\triangle_g + \mathcal{R}_\psi$ is coercive.
There exists a positive constant $\ve \left( n,g, V, \psi  \right)$ depending only on $n,g,  \sup_{x \in M} V(\psi(x))$ and $ S_{\mathcal{R}_\psi}$ as in \eqref{Sh} 
such that if the remaining part of the initial data $(\tau, \pi, U)$ satisfies
\begin{equation} \label{signeB}
\frac{n-1}{n} \tau^2(x) \le 2  V \left( \psi(x) \right)  \textrm{ for all } x \in M ,
\end{equation} 
the equality being strict somewhere in $M$,  $\Vert \pi \Vert_\infty + \Vert U \Vert_\infty > 0$ and
\begin{equation} \label{controle}
\Vert \nabla \tau \Vert_\infty + \Vert \pi \Vert_\infty + \Vert U \Vert_\infty \le \ve ( n,g, V, \psi )~,
\end{equation}
then the conformal constraint system \eqref{EL}-\eqref{M} has a solution $(\vp, W)$.
\end{theo}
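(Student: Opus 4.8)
The plan is to eliminate the vector field $W$ through the momentum constraint and then to solve the resulting nonlocal Einstein--Lichnerowicz equation by a mountain-pass scheme, the smallness \eqref{controle} being used both to make the min-max level stay below the critical Sobolev threshold and to control the coupling. First, since $g$ has no conformal Killing vector field, $\triangle_{g,conf}$ is an isomorphism onto the $L^p$ vector fields for $p>n$; so for $\varphi\in C^0(M)$ with $\varphi>0$ let $W_\varphi$ be the solution of \eqref{M}. Elliptic estimates give $\|W_\varphi\|_{C^1}\le C\big(\|\varphi\|_\infty^{2^*}\|\nabla\tau\|_\infty+\|\pi\|_\infty\big)$ together with the Lipschitz bound $\|W_{\varphi_1}-W_{\varphi_2}\|_{C^1}\le C\|\nabla\tau\|_\infty\,\big\||\varphi_1|^{2^*}-|\varphi_2|^{2^*}\big\|_\infty$. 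Hence $\mathcal A_{\pi,U}(W_\varphi)=c_n\big(|U+\mathcal L_gW_\varphi|^2+\pi^2\big)\ge0$ depends on $\varphi$ only through a term of size $O(\|\nabla\tau\|_\infty)$; moreover $\mathcal A_{\pi,U}(W_\varphi)\not\equiv0$, since $\|\pi\|_\infty+\|U\|_\infty>0$ and, when $\pi\equiv0$, the traceless divergence-free tensor $U$ cannot coincide with $-\mathcal L_gW_\varphi$ everywhere on $M$.

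Next, freeze $W=W_{\varphi_0}$ for a reference $\varphi_0$ and study, on $\{\varphi\in H^1(M):\varphi\ge0\}$, the functional
\[ \mathcal I_{\varphi_0}(\varphi)=\frac12\int_M\big(|\nabla\varphi|_g^2+\mathcal R_\psi\varphi^2\big)dv_g-\frac{1}{2^*}\int_M\mathcal B_{\tau,\psi,V}\,\varphi^{2^*}\,dv_g+\frac{1}{2^*}\int_M\frac{\mathcal A_{\pi,U}(W_{\varphi_0})}{\varphi^{2^*}}\,dv_g, \]
whose positive critical points solve \eqref{EL} with $W=W_{\varphi_0}$ (a truncation near $0$ together with the singularity of the last term confines critical points to $\varphi>0$, and elliptic regularity and the Harnack inequality then give smoothness and a positive lower bound, the term $\mathcal A_{\pi,U}(W_{\varphi_0})\varphi^{-2^*-1}$ preventing $\varphi$ from vanishing). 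Coercivity of $\triangle_g+\mathcal R_\psi$ makes the first two terms an equivalent $H^1$-norm squared; the singular term forces $\mathcal I_{\varphi_0}\to+\infty$ as $\varphi\to 0$ and creates a local minimum $\bar\varphi>0$ whose energy $\mathcal I_{\varphi_0}(\bar\varphi)$ is small (it tends to $0$ with $\mathcal A_{\pi,U}(W_{\varphi_0})$). Since $\mathcal B_{\tau,\psi,V}\ge0$ and $\not\equiv0$ by \eqref{signeB}, the functional is unbounded below along suitable large arguments, so $\mathcal I_{\varphi_0}$ has the mountain-pass geometry with a min-max level $c_{\varphi_0}>\mathcal I_{\varphi_0}(\bar\varphi)$.

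The heart of the argument is the Palais--Smale analysis: by concentration-compactness any Palais--Smale sequence of $\mathcal I_{\varphi_0}$ at level $c$ converges strongly provided
\[ c<\mathcal I_{\varphi_0}(\bar\varphi)+\frac1n\big(\sup_M\mathcal B_{\tau,\psi,V}\big)^{-\frac{n-2}{2}}K_n^{-n}, \]
where $K_n$ is the sharp Euclidean Sobolev constant; note that $\sup_M\mathcal B_{\tau,\psi,V}\le 2c_n\sup_M V(\psi)$ irrespective of $\tau$, so the threshold is bounded below in terms of $n$ and $\sup_M V(\psi)$ only. One checks that $c_{\varphi_0}$ lies below this threshold by estimating $\max_{t\ge0}\mathcal I_{\varphi_0}(\bar\varphi+tB_{\mu,x_0})$ for a standard bubble $B_{\mu,x_0}$ concentrating, as $\mu\to0$, at a maximum point of $\mathcal B_{\tau,\psi,V}$ (adding $\bar\varphi$ keeps the singular term bounded): the leading term of the expansion equals the threshold, the next-order term has the right, strictly negative sign because $\triangle_g+\mathcal R_\psi$ is coercive and $(M,g)$ is of positive Yamabe type (the usual dimensional dichotomy -- a positive-mass-type global term in low dimensions, a local lower-order term otherwise), and the remaining contributions, coming from $\bar\varphi$ and from the coupling through $\mathcal A_{\pi,U}(W_{\varphi_0})$, are absorbed using \eqref{controle} with $\ve=\ve(n,g,\sup_M V(\psi),S_{\mathcal R_\psi})$ small. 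This produces a positive mountain-pass solution $\varphi_{\varphi_0}$ of the frozen equation, with a priori $H^1$- and then $C^0$-bounds depending only on $n$, $g$, $\sup_M V(\psi)$, $S_{\mathcal R_\psi}$ and the small norms of $\pi$, $U$, $\nabla\tau$. Controlling this min-max level -- hence keeping $\ve$ honestly dependent only on $n,g,\sup_M V(\psi),S_{\mathcal R_\psi}$ -- is the step I expect to be the main obstacle, and it is precisely where the strict inequality somewhere in \eqref{signeB}, the smallness \eqref{controle}, and the positive-Yamabe/coercivity hypotheses are all genuinely used.

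It remains to remove the freezing. The assignment $\varphi_0\mapsto W_{\varphi_0}\mapsto\varphi_{\varphi_0}$ (selecting, say, a minimal-energy mountain-pass critical point) sends a fixed ball of positive $C^0$-functions into an equicontinuous subset of itself, by the uniform bounds above, and by the first step the coefficient $\mathcal A_{\pi,U}(W_{\varphi_0})$ of the frozen equation depends on $\varphi_0$ in a Lipschitz manner with constant $O(\|\nabla\tau\|_\infty)$. A fixed-point argument -- Schauder on the closed convex hull of the image, or a contraction once $\|\nabla\tau\|_\infty\le\ve$ is small enough -- then yields $\varphi_0$ with $\varphi_{\varphi_0}=\varphi_0=:\varphi$, so that $(\varphi,W_\varphi)$ solves \eqref{EL}-\eqref{M}, and a standard bootstrap gives smoothness. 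The one delicate point here is that mountain-pass critical points need not be unique, so the solution map is a priori defined only up to selection; this is handled by confining the whole construction, via the uniform a priori bounds of the previous step, to a region on which a continuous selection -- hence the fixed-point step -- can be carried out.
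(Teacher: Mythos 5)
Your outer skeleton (solve the momentum constraint for fixed $\varphi$, solve the frozen Lichnerowicz equation, close with a fixed point) matches the paper's, but the engine you propose for the middle step --- mountain pass below the critical concentration threshold, with a bubble expansion and a positive-mass-type sign condition --- is not what the paper does, and it leaves genuine gaps that you yourself half-acknowledge. First, the level estimate $c_{\varphi_0}<\mathcal I_{\varphi_0}(\bar\varphi)+\frac1n\big(\sup_M\mathcal B_{\tau,\psi,V}\big)^{-\frac{n-2}{2}}K_n^{-n}$ is asserted, not proved; for a critical equation with a non-constant coefficient $\mathcal B_{\tau,\psi,V}$ and an extra singular term the test-function expansion is delicate, requires a positive-mass input in low dimensions, and there is no reason the resulting condition reduces to a smallness assumption on $(\nabla\tau,\pi,U)$ with $\ve$ depending only on $n$, $g$, $\sup_M V(\psi)$ and $S_{\mathcal R_\psi}$. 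The paper avoids this entirely: it invokes the Hebey--Pacard--Pollack existence theorem, which yields a solution of the frozen equation under a smallness condition on the coefficient $\mathcal A_{\pi,U}(W(\eta))$ of the negative power (guaranteed by \eqref{controle} through the elliptic estimate for $\triangle_{g,conf}$, see \eqref{estimeeLgW} and \eqref{ineqexistence}), with no blow-up analysis, and uses that solution only to produce a supersolution.

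Second, and more structurally, your fixed-point step needs a canonical, continuous, uniformly bounded selection $\varphi_0\mapsto\varphi_{\varphi_0}$. Mountain-pass critical points are neither unique nor stable, a priori $C^0$ bounds for unstable solutions at the critical exponent are themselves a hard compactness statement, and ``a continuous selection can be carried out'' is precisely the missing ingredient; the contraction alternative fares no better, since Lipschitz dependence of a non-unique critical point on the coefficient is not available. The paper's key device is Proposition \ref{solmin}: whenever the frozen equation is solvable it has a \emph{minimal} solution $\varphi(a)$, unique by construction, nondecreasing in the coefficient $a$, and stable. Minimality gives the canonical selection; monotonicity, by comparison with the reference equation \eqref{vp0} with constant coefficient $C(n,g,V,\psi)$, gives for free the uniform bound $N_m$ and the invariance of the ball $B_{N_m}$ under $\mathcal T$; and stability is what drives the continuity proof of $\mathcal T$ (the convexity argument on $m(t)=I_0\big(t\mathcal T(\eta_0)+(1-t)\mathcal T_0\big)$, whose third derivative is shown to be positive, ruling out $\mathcal T_0>\mathcal T(\eta_0)$). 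Unless you replace your mountain-pass solution by the minimal one --- or supply both an actual selection theorem and a compactness theorem for unstable solutions --- the argument does not close.
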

\begin{rem}
When the initial data satisfies condition \eqref{signeB}, by the notations in \eqref{expressions}, $\mathcal{B}_{\tau, \psi, V}$ is non-negative in $M$ and positive somewhere. As we shall see in Section \ref{nonex}, when $\mathcal{B}_{\tau, \psi, V}$ is non-negative, then being of positive Yamabe type is a necessary condition.
\end{rem}
\medskip
To our knowledge, Theorem \ref{Th} is the first result in the non-CMC setting ($ \nabla \tau \not\equiv 0$) 
 when $\mathcal{B}_{\tau, \psi, V} \ge  0$. With this result we get the existence of admissible initial data in important cases such as the massive Klein-Gordon setting with nonzero potential or the positive cosmological constant case. Due to its importance we state the latter separately:
\begin{corol} \label{cosmo}
Let $\Lambda$ be a positive constant. Then the vacuum conformal constraint system of equations with positive cosmological constant  $\Lambda$, namely
\begin{equation*}
\left \{
\begin{aligned}
& \frac{4(n-1)}{n-2} \triangle_g \varphi + R(g)  \varphi  = \left( 2 \Lambda  - \frac{n-1}{n} \tau^2 \right) \varphi^{2^*-1} +  \left| U + \mathcal{L}_g W \right|_g^2 \varphi^{-2^*-1}~,  \\
& \triangle_{g, conf}W  = \frac{n-1}{n}\varphi^{2^*} \nabla\tau~, \\
\end{aligned}
\right.
\end{equation*}
has a solution $(\vp,W)$ provided $U \not \equiv 0$ and
\begin{equation} \label{controle2}
\Vert \tau \Vert_{C^1} + \Vert U \Vert_\infty \le C(n,g,\Lambda)~,
\end{equation}
where $C(n,g,\Lambda)$ is some constant depending only on $n,g$ and $\Lambda$.
\end{corol}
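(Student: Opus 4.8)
The plan is to derive Corollary~\ref{cosmo} as a direct specialization of Theorem~\ref{Th}. Take $V \equiv \Lambda$ on $\RR$, which is a smooth nonnegative function on $\RR$ that is not identically zero, and take $\psi \equiv 0$ on $M$. With this choice, $\mathcal{R}_\psi = c_n R(g)$, and $\triangle_g + \mathcal{R}_\psi$ is exactly the conformal Laplacian $c_n^{-1}$ times $\Lg$; since $(M,g)$ is assumed of positive Yamabe type, this operator is coercive by the standard variational characterization of the Yamabe invariant. Moreover $V(\psi(x)) = \Lambda$ for all $x$, so $\sup_{x\in M} V(\psi(x)) = \Lambda$, and $S_{\mathcal{R}_\psi}$ depends only on $n, g$; hence the constant $\ve(n,g,V,\psi)$ produced by Theorem~\ref{Th} depends only on $n, g$ and $\Lambda$. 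Call this constant $C(n,g,\Lambda)$.

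Next I would check that the hypotheses of Theorem~\ref{Th} translate into those of the corollary. The corollary assumes $(M,g)$ has positive Yamabe type (given) and implicitly that $g$ has no conformal Killing vector fields; this last point should either be added as a hypothesis or noted, since Theorem~\ref{Th} requires it. The sign condition \eqref{signeB} becomes $\frac{n-1}{n}\tau^2(x) \le 2\Lambda$ for all $x$, with strict inequality somewhere; since the control \eqref{controle2} forces $\Vert \tau\Vert_\infty$ to be small (bounded by $C(n,g,\Lambda)$, which we may shrink to ensure $\frac{n-1}{n}\Vert\tau\Vert_\infty^2 < 2\Lambda$), this condition is automatically satisfied, the strict inequality holding everywhere. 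The condition $\Vert\pi\Vert_\infty + \Vert U\Vert_\infty > 0$ becomes $\Vert U\Vert_\infty > 0$ since $\pi = 0$ here, i.e. $U \not\equiv 0$, which is the hypothesis of the corollary. Finally \eqref{controle}, namely $\Vert\nabla\tau\Vert_\infty + \Vert\pi\Vert_\infty + \Vert U\Vert_\infty \le \ve(n,g,V,\psi)$, becomes $\Vert\nabla\tau\Vert_\infty + \Vert U\Vert_\infty \le C(n,g,\Lambda)$, which is implied by \eqref{controle2} (after possibly shrinking $C(n,g,\Lambda)$ so that it controls both the $\ve$ from Theorem~\ref{Th} and the smallness needed for \eqref{signeB}).

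With all hypotheses of Theorem~\ref{Th} verified, we obtain a solution $(\vp, W)$ of \eqref{EL}-\eqref{M}. It remains only to observe that, with $V \equiv \Lambda$ and $\psi \equiv 0$, the expressions \eqref{expressions} specialize to $\mathcal{R}_\psi = c_n R(g)$, $\mathcal{B}_{\tau,\psi,V} = c_n(2\Lambda - \frac{n-1}{n}\tau^2)$, and $\mathcal{A}_{\pi,U}(W) = c_n|U + \Lg W|_g^2$; dividing the Einstein-Lichnerowicz equation \eqref{EL} through by $c_n = \frac{n-2}{4(n-1)}$ yields precisely the first equation in the corollary's system, while the momentum constraint \eqref{M} with $\pi\nabla\psi = 0$ is exactly the second. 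Hence $(\vp, W)$ solves the stated system, which completes the proof. There is no real obstacle here: the only care needed is bookkeeping the dependence of the constant and shrinking $C(n,g,\Lambda)$ a final time so that it simultaneously ensures \eqref{controle} and the strict form of \eqref{signeB}.
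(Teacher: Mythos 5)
Your proposal is correct and is exactly the argument the paper intends: the corollary is the specialization of Theorem \ref{Th} to $V\equiv\Lambda$, $\psi\equiv 0$, $\pi\equiv 0$ (the paper omits the proof as immediate), and you correctly note the one genuine bookkeeping point, namely that the $C^1$-control on $\tau$ in \eqref{controle2} must be used twice --- once for $\Vert\nabla\tau\Vert_\infty$ in \eqref{controle} and once for $\Vert\tau\Vert_\infty$ to guarantee the sign condition \eqref{signeB} --- after shrinking $C(n,g,\Lambda)$. Your remarks that the no-conformal-Killing-field and positive-Yamabe hypotheses are inherited implicitly from the setting of Theorem \ref{Th} are also accurate.
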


In this scalar-field setting the smallness assumptions \eqref{signeB} and \eqref{controle} only involve the scalar field $\psi$ and the potential $V$, which is itself related to $\psi$ by some wave equation that expresses the conservation of energy, see Wald \cite{Wa}. This emphasizes the influence of $\psi$ which appears to be the important parameter to consider. 

\medskip
There are several interesting results on systems like  \eqref{EL}-\eqref{M}. 
They can be roughly classified according to two criteria: (i) the CMC (constant mean curvature) versus the non-CMC case, 
and, if we forget about the fact that $\mathcal{B}_{\tau, \psi, V}$ may change sign, (ii) the positive case, where $\mathcal{B}_{\tau, \psi, V} > 0$, versus  
the nonpositive case, where $\mathcal{B}_{\tau, \psi, V} \le 0$. 
In the CMC setting ($\nabla \tau = 0$) the system \eqref{EL}-\eqref{M} is semi-decoupled. Equation \eqref{M} is solvable, either assuming that there are no conformal Killing fields on $M$ or assuming that $\pi\nabla \psi$ is orthogonal to such fields (which generically do not exist, see Beig-Chru\'sciel-Schoen \cite{BeChSc}). 
Its solution appears as a coefficient in \eqref{EL}. In the CMC-case when $\mathcal{B}_{\tau,\psi,V} \le 0$, for instance in the vacuum case, the system is fully understood (see Isenberg \cite{Ise} or Choquet-Bruhat, Isenberg and Pollack \cite{ChoIsPo}). Partial results exist in the $\max_M \mathcal{B}_{\tau,\psi,V}  > 0 $ case, and we refer to Hebey, Pacard and Pollack \cite{HePaPo}, and Ng\^o and Xu \cite{NgoXu}. In the non-CMC case, results were available only when $\mathcal{B}_{\tau,\psi,V} \le 0$ and assuming smallness assumptions on the initial data. For near-CMC results see Allen-Clausen-Isenberg \cite{ACI} or Dahl-Gicquaud-Humbert \cite{DaGiHu}. Results when $U$ is small can be found in Holst-Nagy-Tsogtgerel \cite{HoNaTso} or Maxwell \cite{Maxwell}. A few non-existence results exist for near-CMC initial data: see Isenberg-\`O Murchadha \cite{IsMu} or again Dahl-Gicquaud-Humbert \cite{DaGiHu}. A condition like our condition \eqref{controle2} is both a near-CMC assumption and a control 
on $\Vert U \Vert_\infty$, and Corollary \ref{cosmo} can be thought as a generalization of the available existence results for the vacuum conformal constraint system 
of equations to the more involved case where $\mathcal{B}_{\tau, \psi, V} > 0$.

\medskip
The paper is organized as follows. In section \ref{nonex} we comment on Theorem \ref{Th}. We prove necessary conditions for the existence of solutions of \eqref{EL}-\eqref{M} and show that the need of a control on the initial data is natural. Section \ref{solumini} is devoted to the proof of the existence of a smallest solution for equation \eqref{EL}. Theorem \ref{Th} is proved in section \ref{pingpong} using a fixed-point argument.

\paragraph{Acknowledgements.}
The author wishes to thank warmly Olivier Druet and Emmanuel Hebey for many stimulating discussions and useful comments on this paper.

\section{Necessary conditions and non-existence results.}  \label{nonex}

We discuss the assumptions of Theorem \ref{Th}. Throughout this section we assume that $V$ is a smooth nonnegative function in $\RR$, not everywhere zero.  We let
\begin{equation} \label{infcoercif}
 \mu_g = \inf_{\vp \in H^1(M), \Vert \vp \Vert_{2} = 1} \int_M \left( |\nabla \vp|^2 + c_n R(g) \vp^2 \right) dv_g 
 \end{equation}
be the coercivity constant of $\triangle_g + c_n R(g)$, where $c_n$ is as in \eqref{expressions}, and
\begin{equation} \label{infcoercifpsi}
 \mu_{g,\psi} = \inf_{\vp \in H^1(M), \Vert \vp \Vert_{2} = 1} \int_M \left( |\nabla \vp|^2 + \mathcal{R}_\psi \vp^2 \right) dv_g 
 \end{equation}
be the analogue for $\triangle_g + \mathcal{R}_\psi$. The first result we prove shows that the coercivity of $\triangle_g + \mathcal{R}_\psi$ is a necessary condition to the existence of admissible initial data when $\mathcal{B}_{\tau,\psi,V}$ is non-negative. This shows, in some sense, the optimality of the assumptions required on $\psi$ in Theorem \ref{Th}. As a by-product we obtain an integral control on $|\nabla \psi|$, which in turns implies a strong geometric condition on the underlying manifold which shows a radically different behavior than in the vacuum case.

\begin{prop}
Let $(\psi, \tau, \pi, U)$ be an initial data set such that $ \mathcal{B}_{\tau,\psi,V} \ge 0 $ on $M$, where $\mathcal{B}_{\tau,\psi,V} $ is as in \eqref{expressions}, but $\mathcal{B}_{\tau,\psi,V} $ is not everywhere zero. If a solution $(\vp, W)$ of \eqref{EL}-\eqref{M} exists, then $\triangle_g + \mathcal{R}_\psi$ is coercive and there holds that
\begin{equation} \label{bornepsi}
 \int_M |\nabla \psi|^2 dv_g <  \int_M  R(g) dv_g.
 \end{equation}
In particular, $(M,g)$ is of positive Yamabe type and we get both that $ \mu_{g,\psi}  > 0$ and $\mu_g > 0$.
\end{prop}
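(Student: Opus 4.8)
The idea is that everything in the statement is squeezed out of the sign of the right-hand side of \eqref{EL}. Since $\vp$ is smooth and positive and $M$ is compact, $\vp$ lies between two positive constants. Set $f:=\triangle_g\vp+\mathcal{R}_\psi\vp$, so that by \eqref{EL}
\[
f=\mathcal{B}_{\tau,\psi,V}\,\vp^{2^*-1}+\mathcal{A}_{\pi,U}(W)\,\vp^{-2^*-1}.
\]
By \eqref{expressions} one has $\mathcal{A}_{\pi,U}(W)\ge 0$, and $\mathcal{B}_{\tau,\psi,V}\ge 0$ by assumption, so $f\ge 0$; moreover $f\not\equiv 0$, since $\mathcal{B}_{\tau,\psi,V}$ is positive somewhere and $\vp>0$. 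Thus $\vp$ is a positive function solving $\triangle_g\vp+\mathcal{R}_\psi\vp=f\ge 0$ with $f\not\equiv 0$, and this is all we will use about the solution.

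To get the coercivity of $\triangle_g+\mathcal{R}_\psi$ I would use the ground-state substitution. For $u\in C^\infty(M)$ write $u=\vp w$ with $w=u/\vp\in C^\infty(M)$; expanding $|\nabla u|^2$ and integrating by parts in $\int_M(\triangle_g\vp)\vp w^2$, the cross terms cancel and one obtains
\[
\int_M\left(|\nabla u|^2+\mathcal{R}_\psi u^2\right)dv_g=\int_M\vp^2|\nabla w|^2\,dv_g+\int_M f\,\vp\,w^2\,dv_g,
\]
which extends to all $u\in H^1(M)$ by density since $\vp$ is two-sided bounded. The right-hand side is nonnegative, hence $\mu_{g,\psi}\ge 0$. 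If $\mu_{g,\psi}=0$, a standard minimization argument (the quadratic form is bounded below and $H^1\hookrightarrow L^2$ is compact) produces a normalized minimizer; replacing it by its absolute value and applying the strong maximum principle to $\triangle_g u_0+\mathcal{R}_\psi u_0=0$ gives a positive minimizer $u_0=\vp w_0$ with $w_0>0$, and feeding it into the identity above forces $\int_M f\vp w_0^2\,dv_g=0$, whence $f\equiv 0$ — a contradiction. Therefore $\mu_{g,\psi}>0$, which is equivalent to coercivity. (One could equally compare $\vp$ with the first eigenfunction $\phi_1>0$ of $\triangle_g+\mathcal{R}_\psi$: self-adjointness gives $\lambda_1\int_M\phi_1\vp\,dv_g=\int_M f\phi_1\,dv_g>0$, so $\lambda_1=\mu_{g,\psi}>0$.)

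The last two assertions are then immediate test-function computations. Plugging the constant function into the Rayleigh quotient defining $\mu_{g,\psi}$ gives $0<\mu_{g,\psi}\le \mathrm{Vol}(M,g)^{-1}\int_M\mathcal{R}_\psi\,dv_g$, i.e. $\int_M\mathcal{R}_\psi\,dv_g>0$; since $\mathcal{R}_\psi=c_n\big(R(g)-|\nabla\psi|_g^2\big)$ with $c_n>0$ this is exactly \eqref{bornepsi}. Finally, the pointwise inequality $\mathcal{R}_\psi\le c_n R(g)$ yields $\int_M(|\nabla u|^2+c_nR(g)u^2)\,dv_g\ge\int_M(|\nabla u|^2+\mathcal{R}_\psi u^2)\,dv_g$ for every $u\in H^1(M)$, hence $\mu_g\ge\mu_{g,\psi}>0$; so the conformal Laplacian $\triangle_g+c_nR(g)$ is coercive, that is $(M,g)$ is of positive Yamabe type, and both $\mu_g>0$ and $\mu_{g,\psi}>0$.

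The only non-elementary ingredients are standard facts: the existence and positivity of the minimizer (equivalently of the first eigenfunction), the equivalence between positivity of the bottom of the spectrum and coercivity, and the identification of the sign of the Yamabe invariant with the sign of $\mu_g$. I expect the only mild obstacle to be packaging the coercivity step cleanly; once $\vp$ is recognized as a positive supersolution with $f\not\equiv 0$, the rest is a sequence of one-line test-function estimates.
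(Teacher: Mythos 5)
Your proof is correct, and it diverges from the paper's in two places worth noting. For the coercivity, the paper's argument is exactly your parenthetical remark: it takes the first eigenfunction $u_{g,\psi}>0$ of $\triangle_g+\mathcal{R}_\psi$, integrates \eqref{EL} against it, and reads off $\mu_{g,\psi}>0$ from the positivity of $\int_M u_{g,\psi}\bigl(\mathcal{B}_{\tau,\psi,V}\vp^{2^*-1}+\mathcal{A}_{\pi,U}(W)\vp^{-2^*-1}\bigr)dv_g$; your primary route via the ground-state substitution $u=\vp w$ is a valid Picone-type variant of the same idea, though it is slightly longer since you still need the eigenfunction to rule out $\mu_{g,\psi}=0$. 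The more genuine difference is in the Yamabe-positivity step: the paper argues by contradiction, taking a conformal metric $\tilde g=v^{-4/(n-2)}g$ with $R(\tilde g)\le 0$, dividing the conformal transformation law by $v^{2^*-1}$ and integrating to contradict $\int_M R(g)\,dv_g>0$; you instead use the pointwise domination $\mathcal{R}_\psi\le c_nR(g)$ to get $\mu_g\ge\mu_{g,\psi}>0$ and then invoke the standard equivalence between the sign of the bottom of the spectrum of the conformal Laplacian and the Yamabe type. Your route is cleaner and gives $\mu_g>0$ directly (which the paper asserts but does not separately prove), at the cost of citing that standard equivalence as a black box, whereas the paper's conformal-change computation is self-contained. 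Both arguments are sound.
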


\begin{proof}
Using standard variational techniques and elliptic theory 
we easily obtain that there exists a smooth positive function $u_{g,\psi}$ with $\Vert u_{g,\psi} \Vert_2 = 1$ such that 
\begin{equation} \label{equa2psi}
\triangle_g u_{g,\psi} + \mathcal{R}_\psi u_{g,\psi} = \mu_{g,\psi} u_{g,\psi}
 \end{equation}
where $\mu_{g,\psi}$ is as in \eqref{infcoercifpsi}. Since $(\vp,W)$ solves \eqref{EL}-\eqref{M} and $\mathcal{B}_{\tau,\psi,V}$ is non-negative, $\mathcal{B}_{\tau,\psi,V} \not \equiv 0$, integrating \eqref{EL} against $u_{g,\psi}$ and using \eqref{equa2psi} shows that $\mu_{g,\psi} > 0$. It is well-known (see again \cite{HeDruRob}) that this implies the coercivity of $\triangle_g + \mathcal{R}_\psi$  which implies in particular that $\int_M \mathcal{R}_\psi dv_g > 0$ and yields \eqref{bornepsi}. 
Assuming by contradiction that the Yamabe type of $(M,g)$ is nonpositive, we get that there exists $\tilde g \in [g]$, where $[g]$ is the conformal class of $g$, with $R({\tilde g}) \le 0$ 
in $M$. Writing that $g = v^{4/(n-2)}\tilde g$ for $v > 0$, there holds that
$$\Delta_{\tilde g}v + c_n R({\tilde g})v = c_n R(g) v^{2^\star-1}$$
Dividing the equation by $v^{2^\star-1}$ and integrating the contradiction follows from \eqref{bornepsi}. 
\end{proof}

Now we discuss a non-existence result which shows the necessity of a control on $\pi$ depending on $\mathcal{B}_{\tau, \psi,V}$. More precisely, the following 
result by Hebey, Pacard and Pollack \cite{HePaPo} holds true.
\begin{prop} \label{nonexistence}
Let $(\tau, \psi)$ be smooth functions with $\mathcal{B}_{\tau, \psi, V} >0$ in $M$, where $\mathcal{B}_{\tau, \psi, V}$ is as in \eqref{expressions}. If $\pi$ is a smooth function in $M$ satisfying
\begin{equation*}
\int_M  \pi^{\frac{n+2}{2n}} dv_g 
   > \left( \frac{(n-1)^{n-1}c_n^{n-1}}{n^n} \right)^{\frac{n+2}{4n}} 
 \frac{\int_M |R(g)|^{\frac{n+2}{4}} dv_g}{\left( \min_{x \in M } \mathcal{B}_{\tau, \psi,V}(x) \right)^{\frac{(n-1)(n+2)}{4n}}} \hskip.1cm ,
  \end{equation*}
then the system \eqref{EL}-\eqref{M} admits no solutions with $(\psi, \tau, \pi, U)$ as initial data set for any smooth traceless and divergence-free $(2,0)$-tensor $U$.
\end{prop}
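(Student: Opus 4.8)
The plan is to derive, as a \emph{necessary} condition for the existence of a solution, exactly the reverse of the inequality displayed in the statement, with the same constant; Proposition~\ref{nonexistence} is then its contrapositive. So suppose $(\vp,W)$ solves \eqref{EL}--\eqref{M} with $\vp>0$ smooth. The first step is to integrate \eqref{EL} over the closed manifold $M$: since $\int_M\triangle_g\vp\,dv_g=0$, one obtains $\int_M\mathcal{R}_\psi\,\vp\,dv_g=\int_M\mathcal{B}_{\tau,\psi,V}\,\vp^{2^*-1}\,dv_g+\int_M\mathcal{A}_{\pi,U}(W)\,\vp^{-2^*-1}\,dv_g$. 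Using $\mathcal{R}_\psi=c_n\big(R(g)-|\nabla\psi|_g^2\big)\le c_n|R(g)|$, $\mathcal{A}_{\pi,U}(W)=c_n\big(|U+\mathcal{L}_gW|_g^2+\pi^2\big)\ge c_n\pi^2$ and $\vp>0$, this yields the basic inequality
\[
\int_M\mathcal{B}_{\tau,\psi,V}\,\vp^{2^*-1}\,dv_g+c_n\int_M\pi^2\,\vp^{-2^*-1}\,dv_g\ \le\ c_n\int_M|R(g)|\,\vp\,dv_g .
\]

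The main step is to eliminate $\vp$ by a short chain of Hölder inequalities whose exponents are tuned so that every power of $\vp$ cancels. First, writing $\vp=(\vp^{2^*-1})^{\frac{n-2}{n+2}}$ and using Hölder with the conjugate pair $\big(\frac{n+2}{4},\frac{n+2}{n-2}\big)$,
\[
\int_M|R(g)|\,\vp\,dv_g\ \le\ \Big(\int_M|R(g)|^{\frac{n+2}{4}}\,dv_g\Big)^{\frac{4}{n+2}}\Big(\int_M\vp^{2^*-1}\,dv_g\Big)^{\frac{n-2}{n+2}},
\]
and moreover $\int_M\vp^{2^*-1}\,dv_g\le(\min_M\mathcal{B}_{\tau,\psi,V})^{-1}\int_M\mathcal{B}_{\tau,\psi,V}\,\vp^{2^*-1}\,dv_g$. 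Second, the product $\big(\mathcal{B}_{\tau,\psi,V}\vp^{2^*-1}\big)^{\frac{3n-2}{4n}}\big(\pi^2\vp^{-2^*-1}\big)^{\frac{n+2}{4n}}$ equals $\mathcal{B}_{\tau,\psi,V}^{\frac{3n-2}{4n}}\pi^{\frac{n+2}{2n}}$ (the exponent of $\vp$ is $0$), so Hölder with the conjugate pair $\big(\frac{4n}{3n-2},\frac{4n}{n+2}\big)$ gives
\[
\int_M\mathcal{B}_{\tau,\psi,V}^{\frac{3n-2}{4n}}\pi^{\frac{n+2}{2n}}\,dv_g\ \le\ \Big(\int_M\mathcal{B}_{\tau,\psi,V}\,\vp^{2^*-1}\,dv_g\Big)^{\frac{3n-2}{4n}}\Big(\int_M\pi^2\,\vp^{-2^*-1}\,dv_g\Big)^{\frac{n+2}{4n}},
\]
where on the left one bounds $\mathcal{B}_{\tau,\psi,V}^{\frac{3n-2}{4n}}\ge(\min_M\mathcal{B}_{\tau,\psi,V})^{\frac{3n-2}{4n}}$.

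To close the argument, set $P=\int_M\mathcal{B}_{\tau,\psi,V}\vp^{2^*-1}\,dv_g$, $Q=c_n\int_M\pi^2\vp^{-2^*-1}\,dv_g$, $m=\min_M\mathcal{B}_{\tau,\psi,V}>0$, $Z=\int_M|R(g)|^{\frac{n+2}{4}}\,dv_g$ and $\Pi=\int_M\pi^{\frac{n+2}{2n}}\,dv_g$, so that the inequalities above become the two scalar relations
\[
P+Q\ \le\ c_n\,Z^{\frac{4}{n+2}}\,m^{-\frac{n-2}{n+2}}\,P^{\frac{n-2}{n+2}}\qquad\text{and}\qquad m^{\frac{3n-2}{4n}}\,\Pi\ \le\ c_n^{-\frac{n+2}{4n}}\,P^{\frac{3n-2}{4n}}\,Q^{\frac{n+2}{4n}} .
\]
The first relation forces $P\le P_0:=c_n^{\frac{n+2}{4}}Z\,m^{-\frac{n-2}{4}}$ and permits $Q\le c_nZ^{\frac{4}{n+2}}m^{-\frac{n-2}{n+2}}P^{\frac{n-2}{n+2}}-P$; since the right-hand side of the second relation is increasing in $Q$, bounding $Q$ by this maximal value and writing $P=tP_0$ with $t\in(0,1]$ reduces everything to the one-variable problem of maximizing $h(t)=t^{\frac{3n-2}{4n}}\big(t^{\frac{n-2}{n+2}}-t\big)^{\frac{n+2}{4n}}$ over $(0,1]$. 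An elementary computation gives that the maximum is attained at $t_*=\big(\frac{n-1}{n}\big)^{\frac{n+2}{4}}$ and equals $(n-1)^{\frac{(n-1)(n+2)}{4n}}\,n^{-\frac{n+2}{4}}$; substituting back, the powers of $c_n$ and of $m$ assemble exactly into $\big(\frac{(n-1)^{n-1}c_n^{n-1}}{n^n}\big)^{\frac{n+2}{4n}}$ and $m^{-\frac{(n-1)(n+2)}{4n}}$, so that
\[
\int_M\pi^{\frac{n+2}{2n}}\,dv_g\ \le\ \left(\frac{(n-1)^{n-1}c_n^{n-1}}{n^n}\right)^{\frac{n+2}{4n}}\ \frac{\int_M|R(g)|^{\frac{n+2}{4}}\,dv_g}{\big(\min_M\mathcal{B}_{\tau,\psi,V}\big)^{\frac{(n-1)(n+2)}{4n}}} .
\]
This contradicts the hypothesis on $\pi$, so no solution can exist. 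The only delicate point is purely computational: one must check that the two Hölder pairs really do annihilate all powers of $\vp$, and carry out the optimization of $h$ exactly — it is this optimization that produces the sharp constant; the rest is routine tracking of exponents.
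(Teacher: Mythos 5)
Your proof is correct and follows essentially the same route as the paper: integrate \eqref{EL}, apply the same two H\"older inequalities with exponent pairs $\left(\tfrac{n+2}{4},\tfrac{n+2}{n-2}\right)$ and $\left(\tfrac{4n}{3n-2},\tfrac{4n}{n+2}\right)$, and reduce to a one-variable optimization that yields the sharp constant $\left(\tfrac{(n-1)^{n-1}}{n^n}\right)^{\frac{n+2}{4n}}$. The only (cosmetic) difference is that you substitute the bounds $\mathcal{R}_\psi\le c_n|R(g)|$, $\mathcal{A}_{\pi,U}(W)\ge c_n\pi^2$ and $\mathcal{B}_{\tau,\psi,V}\ge\min_M\mathcal{B}_{\tau,\psi,V}$ at the outset, whereas the paper first derives the sharper intermediate criterion \eqref{hepa} and specializes at the end.
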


\begin{proof}
Let $W$ be a smooth vector field in $M$. Following Hebey-Pacard-Pollack \cite{HePaPo} we get that \eqref{EL} has no solutions if
\begin{equation} \begin{aligned} \label{hepa}
\int_M \mathcal{A}_{\pi,U}(W)^{\frac{n+2}{4n}}  > \left( \frac{(n-1)^{n-1}}{n^n}\right)^{\frac{n+2}{4n}}   \left( \min_M \mathcal{B}_{\tau,\psi,V} \right)^{-\frac{(n-1)(n+2)}{4n}} \int_M ( \mathcal{R}_\psi^+)^{\frac{n+2}{4}}dv_g\hskip.1cm , \\
\end{aligned} 
\end{equation}
where we used the notations of \eqref{expressions} and where for any function $f$ we write $f^+ = \max(f,0)$. 
We prove \eqref{hepa} by contradiction. We assume that \eqref{EL} has a smooth positive  solution $\vp$. First, we integrate \eqref{EL} to get
\begin{equation} \label{intEL}
 \int_M \mathcal{B}_{\tau, \psi,V} \vp^{2^*-1} dv(g) + \int_M \mathcal{A}_{\pi,U}(W) \vp^{-2^*-1} dv(g) = \int_M \mathcal{R}_\psi \vp dv(g)  
 \end{equation}
and then we apply a H\"{o}lder inequality with parameters $ \frac{n+2}{4}$ and $\frac{n+2}{n-2}$ to the right-hand side of \eqref{intEL}. This yields
\[ \begin{aligned}
\int_M \mathcal{R}_\psi \vp dv(g) \le 
  \left( \int_M (\mathcal{R}_\psi^+)^{\frac{n+2}{4}} \mathcal{B}_{\tau,\psi,V}^{-\frac{n-2}{4}} dv(g) \right)^{\frac{4}{n+2}} \left( \int_M \mathcal{B}_{\tau,\psi,V} \vp^{ 2^*-1} dv(g) \right)^{\frac{n-2}{n+2}} \\
 \end{aligned} .\]
Independently, a H\"{o}lder inequality with parameters $\frac{4n}{n+2}$ and $\frac{4n}{3n-2}$ yields
\[ \begin{aligned}
\int_M \mathcal{A}_{\pi,U}(W)^{\frac{n+2}{4n}} \mathcal{B}_{\tau,\psi,V}^{\frac{3n-2}{4n}} dv(g)
 & \le  \left( \int_M \mathcal{A}_{\pi,U}(W) \vp^{-2^*-1} dv(g) \right)^{\frac{n+2}{4n}} \left( \int_M \mathcal{B}_{\tau,\psi,V} \vp^{2^*-1} dv(g) \right)^{\frac{3n-2}{4n}}
 \hskip.1cm , \\
\end{aligned} \]
so that, letting
$X = \left( \int_M \mathcal{B}_{\tau, \psi,V} \vp^{2^*-1} dv(g) \right)^{\frac{4}{n+2}}$, 
equation \eqref{intEL} gives
\begin{equation}\label{AddedEqtNonEx}
\left( \int_M (\mathcal{R}_\psi^+)^{\frac{n+2}{4}} \mathcal{B}_{\tau, \psi,V}^{-\frac{n-2}{4}} dv(g) \right)^{\frac{4}{n+2}} \geqslant X + \left( \int_M \mathcal{A}_{\pi,U}(W)^{\frac{n+2}{4n}} \mathcal{B}_{\tau, \psi,V}^{\frac{3n-2}{4n}} dv(g) \right)^{\frac{4n}{n+2}}X^{1-n}~.
\end{equation}
Let $K_X$ be the right hand side in \eqref{AddedEqtNonEx}. The minimum value of $K_X$ as a function of $X$ is:
\begin{equation}\label{AddedEqt2NonEx}
\min_{X > 0}K_X = \frac{n}{(n-1)^{\frac{n-1}{n}}} \left(  \int_M \mathcal{A}_{\pi,U}(W)^{\frac{n+2}{4}} \mathcal{B}_{\tau, \psi, V}^{\frac{3n-2}{4n}} dv_g \right)^{\frac{4}{n+2}} 
\end{equation}
and the non-existence condition \eqref{hepa} easily follows from \eqref{AddedEqtNonEx} and \eqref{AddedEqt2NonEx} by contradiction.
Then Proposition \ref{nonexistence} follows from \eqref{hepa} since $\mathcal{A}_{\pi,U}(W) \ge c_n \pi^2$ and  
$\mathcal{R}_\psi^+ \le c_n|R(g)|$ by \eqref{expressions}.
\end{proof}

\section{A minimal solution of the Einstein-Lichnerowicz equation} \label{solumini}

In the constant mean curvature setting the constraint system is completely decoupled and it reduces to the Einstein-Lichnerowicz equation \eqref{EL}. We now investigate  \eqref{EL} independently and for the sake of clarity consider the following equation: 
\begin{equation} \label{ELa} \tag{$EL_a$}
\triangle_g u + h u  = f u^{2^*-1} + \frac{a}{u^{2^*+1}}~,
\end{equation}
where $h, f$ and $a$ are smooth functions on $M$. 
In the following we assume that $\triangle_g + h$ is coercive, $\max_M f >0$ and $a$ is nonnegative and nonzero. Using repeatedly the sub and super solution method we prove that each time equation \eqref{ELa} has a smooth positive solution then it has a smallest solution for the $L^\infty$-norm:

\begin{prop} \label{solmin}
Let $a \ge 0$ be a nonzero smooth function in $M$. Assume that $\triangle_g + h$ is coercive and $\max_M f >0$. If \eqref{ELa} has a smooth positive solution 
then there exists a smooth positive function $\vp(a)$ solving \eqref{ELa} such that for any other solution $\vp$, with $\vp \not\equiv \vp(a)$, there holds that $\vp(a) < \vp$ in $M$. 
Moreover, $\vp(a)$ is stable in the sense that for any $\theta \in H^1(M)$,
\[ \int_M\left( |\nabla \theta|^2 + \left [h - (2^*-1)f \vp(a)^{2^*-2} + (2^*+1) \frac{a}{ \vp(a)^{2^*+2}} \right] \theta^2 \right) dv_g \ge 0~, \]
and $a \to \vp(a)$ is also nondecreasing with respect to $a$ in the sense that  if $a_1 \le a_2$ in $M$, provided that $\vp(a_1)$ and $\vp(a_2)$ exist, then there holds that $\vp(a_1) \le \vp(a_2)$.
\end{prop}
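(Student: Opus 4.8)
The plan is to build $\vp(a)$ as a decreasing limit of iterated solutions obtained by the monotone sub/supersolution scheme. First I would fix a known solution $\vp_0$ of \eqref{ELa} (which exists by hypothesis); this will serve as an initial supersolution. To produce a subsolution that lies below \emph{every} solution, I would use coercivity of $\triangle_g+h$: since $a\ge 0$ is nonzero, the linear problem $\triangle_g w + hw = a\, w^{-(2^*+1)}$ — or more simply a small multiple $\underline{\vp}=\delta\phi_1$ of the first (positive) eigenfunction $\phi_1$ of $\triangle_g+h$ — gives, for $\delta>0$ small enough, a genuine subsolution because the singular term $a\underline\vp^{-(2^*+1)}\sim \delta^{-(2^*+1)}$ blows up while the other terms stay bounded; here one uses $a\not\equiv 0$ together with a lower bound for $\phi_1$. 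The key point I must check is that this $\underline\vp$ lies below an arbitrary solution $\vp$: this follows from the maximum principle applied at a point where $\vp/\phi_1$ (or $\vp-\underline\vp$) attains its minimum, exploiting again the sign of the singular term and the coercivity. With $\underline\vp\le\vp_0$ in hand, the standard monotone iteration $\triangle_g u_{k+1} + \Lambda u_{k+1} = F(u_k)$ (with $\Lambda$ large so the right-hand side is monotone in $u$ on the order interval $[\underline\vp,\vp_0]$, using that solutions are bounded away from $0$) produces a nonincreasing sequence $u_k$ with $\underline\vp\le u_{k+1}\le u_k\le\vp_0$; it converges in $C^2$ by elliptic regularity to a solution $\vp(a)$, and $\vp(a)\le\vp_0$.

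Next I would upgrade $\vp(a)\le\vp_0$ to the claimed \emph{strict minimality} among all solutions. Given any solution $\vp$, the above subsolution argument gives $\underline\vp\le\vp$, so running the same iteration starting from $\vp$ as supersolution yields a solution $\le\vp$; but I want to conclude it equals $\vp(a)$ or at least that $\vp(a)\le\vp$. The clean way is: the iteration started from $\vp$ and the one started from $\vp_0$ both decrease toward \emph{the} minimal solution above $\underline\vp$, because at each step $u_{k+1}$ depends monotonically on $u_k$; more carefully, if $\vp$ and $\vp'$ are two solutions, then $\min(\vp,\vp')$ is a supersolution (here one checks the inequality on the two closed regions $\{\vp\le\vp'\}$ and $\{\vp'\le\vp\}$, using convexity/monotonicity of the nonlinearity appropriately — this is where the structure $fu^{2^*-1}+a u^{-(2^*+1)}$ matters), so iterating from $\min(\vp,\vp')$ produces a solution below both; iterating this observation and using that $\vp(a)$ is obtained this way shows $\vp(a)\le\vp$ for all solutions $\vp$. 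Strictness $\vp(a)<\vp$ when $\vp\not\equiv\vp(a)$ then follows from the strong maximum principle: $\vp-\vp(a)\ge 0$ solves a linear equation $\triangle_g(\vp-\vp(a)) + c\,(\vp-\vp(a))=0$ with $c$ bounded (by the mean-value theorem on the two bounded-away-from-zero functions), so it is either identically $0$ or strictly positive.

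Finally, the two remaining assertions. For \textbf{stability}, I would argue by contradiction: if the quadratic form $Q_{\vp(a)}(\theta)=\int_M(|\nabla\theta|^2+[h-(2^*-1)f\vp(a)^{2^*-2}+(2^*+1)a\vp(a)^{-(2^*+2)}]\theta^2)\,dv_g$ had a negative direction, then its first eigenvalue $\lambda_1<0$ with positive eigenfunction $\eta>0$; then $\vp(a)-\epsilon\eta$ is, for small $\epsilon>0$, a strict supersolution of \eqref{ELa} — this is a Taylor expansion of the nonlinearity $F(u)=fu^{2^*-1}+au^{-(2^*+1)}$ at $\vp(a)$, where the linearization is exactly $F'(\vp(a))=(2^*-1)f\vp(a)^{2^*-2}-(2^*+1)a\vp(a)^{-(2^*+2)}$ and the sign of $\lambda_1$ forces the first-order term to dominate, provided one keeps $\vp(a)-\epsilon\eta$ bounded below away from $0$ (true for $\epsilon$ small since $\vp(a)$ is bounded below). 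Iterating from this strict supersolution and above $\underline\vp$ would produce a solution strictly below $\vp(a)$, contradicting minimality; hence $Q_{\vp(a)}\ge 0$. For \textbf{monotonicity in $a$}: if $a_1\le a_2$ and both $\vp(a_1),\vp(a_2)$ exist, note $\vp(a_2)$ is a supersolution of $(EL_{a_1})$ since $a_1\vp(a_2)^{-(2^*+1)}\le a_2\vp(a_2)^{-(2^*+1)}$; combined with the subsolution $\underline\vp$ for $(EL_{a_1})$ (which can be chosen independent of, or adapted to, $a_1\le a_2$), the iteration for $(EL_{a_1})$ starting from $\vp(a_2)$ yields a solution $\le\vp(a_2)$, and by minimality that solution is $\ge\vp(a_1)$, whence $\vp(a_1)\le\vp(a_2)$.

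\emph{Main obstacle.} The delicate point is the handling of the singular term $a/u^{2^*+1}$ throughout: one must keep every iterate (and every solution) uniformly bounded below by a positive constant so that the nonlinearity $F(u)$ is smooth and Lipschitz on the relevant order interval and so that the monotone-iteration and maximum-principle arguments apply. Establishing this uniform lower bound — and verifying that $\min$ of two solutions is a supersolution in the presence of both a superlinear term $fu^{2^*-1}$ (whose sign on $\max_M f>0$ is only partially controlled) and the singular term — is where the real work lies; the rest is the by-now-standard monotone iteration machinery.
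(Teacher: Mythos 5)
Your overall architecture is the same as the paper's (a strict subsolution lying below every solution, a Sattinger-type monotone iteration, stability by contradiction using $\vp(a)-\epsilon\eta$ as a strict supersolution, monotonicity in $a$ via minimality), and your stability and monotonicity arguments match the printed proof. But two steps, as written, would fail. First, $\underline\vp=\delta\phi_1$ is \emph{not} in general a subsolution of \eqref{ELa} for small $\delta$: the hypothesis is only $a\ge 0$, $a\not\equiv 0$, so on the (possibly large) set where $a$ vanishes the singular term is identically zero and the required inequality reads $\delta\lambda_1\phi_1\le f\,\delta^{2^*-1}\phi_1^{2^*-1}$, i.e.\ $\lambda_1\le f\,(\delta\phi_1)^{2^*-2}$, which is violated for small $\delta$ since coercivity gives $\lambda_1>0$ while the right-hand side tends to $0$ (and is negative wherever $f<0$). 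This is not a corner case: in the application $a=\mathcal{A}_{\pi,U}(W)$ is only known to be nonnegative and positive somewhere. The paper's construction handles exactly this by solving $\triangle_g u_\delta+hu_\delta=a-\delta f^{-}-\delta$ and taking $v_\ve=\ve u_{\delta_0}$; the correction $-\delta f^{-}-\delta$ is what makes $v_\ve$ a strict subsolution also where $a=0$.

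Second, and more seriously, your iteration runs in the wrong direction for minimality. A monotone scheme started at a supersolution and decreasing converges to the \emph{maximal} solution below the starting point (and started at the exact solution $\vp_0$ it is the constant sequence $u_k\equiv\vp_0$). To obtain the minimal solution one must iterate \emph{upward} from the universal subsolution, capped by some solution $\psi$, and then verify, as the paper does with the maximum principle, that the limit does not depend on the cap $\psi$ (nor on the chosen subsolution). Your patch --- that $\min(\vp,\vp')$ is a weak supersolution, so any two solutions admit a solution below both --- only shows that the solution set is downward directed; it does not, as stated, identify your $\vp(a)$ with a global minimum, and the claim that the downward iterations ``decrease toward the minimal solution above $\underline\vp$'' is false. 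Once the minimal solution is correctly constructed, the remaining points (strictness via the strong maximum principle, stability, monotonicity in $a$) go through as you describe.
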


\begin{rem}
We prove Proposition \ref{solmin} assuming that $a$ is smooth but the result still holds if $a$ is only continuous. In this case the minimal solution we obtain belongs to $C^{1,\alpha}(M)$ for any $0 < \alpha < 1$.
\end{rem}

\begin{proof}
Let $a \ge 0$ be a nonzero smooth function such that \eqref{ELa} has a solution. We start proving that there exists a positive number that bounds from below all the solutions of \eqref{ELa}. First, let us notice that there always exist sub-solutions of \eqref{ELa} as small as we want. Indeed, for any $\delta \ge 0$ we let $u_\delta$ be the unique solution of
\begin{equation} \label{udelta}
 \triangle_g u_\delta + h u_\delta = a - \delta f^-  -\delta
 \end{equation}
where $f^- = - \min(f,0)$. Since $a$ is nonnegative and nonzero, the maximum principle shows that $u_0 > 0$ in $M$. Since
\[ \left( \triangle_g + h \right) (u_0 - u_\delta) = \delta f^-  + \delta~,\]
we obtain by standard elliptic theory that $\Vert u_\delta - u_0 \Vert_\infty \to 0$ as $\delta$ tends to $0$. Let $\delta_0 > 0 $ be small enough in order to have $u_{\delta_0} >0$. Then for $\ve$ small enough, 
\begin{equation} \label{soussol}
v_\ve = \ve u_{\delta_0}
\end{equation}
is a strict sub-solution of \eqref{ELa} since, by \eqref{udelta},
\[ \triangle_g v_\ve + h v_\ve = \ve a - \ve \delta_0 f^-  - \ve \delta_0< \frac{a}{v_\ve^{2^*+1}} + f v_\ve^{2^*-1}. \]
Now we claim that there exists some $\ve_0 > 0$ such that for any positive solution $\vp$ of \eqref{ELa} there holds 
\begin{equation} \label{minunif}
\vp > v_{\ve_0}
\end{equation} 
in $M$, where $v_{\ve_0}$ is as in \eqref{soussol}. We prove the claim by contradiction and assume that 
there exists $\vp_\ve$ solution of \eqref{ELa}, and $x_\ve \in M$, such that $\vp(x_\ve) \le v_\ve(x_\ve)$ for all 
$\ve > 0$. Then, for some $\tilde\ve \in (0,\ve)$, and some $\tilde x_\ve \in M$, 
\[ 1 = \inf_M \frac{\vp}{v_{\tilde\ve}} = \frac{\vp(\tilde x_\ve)}{v_{\tilde\ve}(\tilde x_\ve)}~.\]
In particular, we obtain that
\[v_{\tilde\ve}(\tilde x_\ve) = \vp(\tilde x_\ve)~~\hbox{and}~\triangle_g \vp (\tilde x_\ve) \le \triangle_g v_{\tilde\ve} (\tilde x_\ve) \]
which is impossible since $v_{\tilde\ve}$ is a strict subsolution of \eqref{ELa}.

\medskip
We prove now the existence of a minimal solution of \eqref{ELa}. We follow here the arguments in Sattinger \cite{Sattinger}. For $x \in M$ and $u >0$ we let 
\[F(x,u) = f(x) u(x)^{2^*-1} + \frac{a(x)}{u(x)^{2^*+1}} - h(x) u(x).\] 
Let $\psi$ be a solution of \eqref{ELa} and let $w$ be a strict subsolution of \eqref{ELa} which is less than any positive solution of \eqref{ELa}. We proved the existence of such a $w$ 
in \eqref{minunif}. Also we let $K > 0$ be large enough such that for any $x\in M$, and any $\min_M w  \le u \le \max_M \psi$,
\begin{equation} \label{propK}
\begin{aligned}
 F(x,u) + Ku \ge 0 \textrm{ and } \frac{\partial F}{\partial u}(x,u) + K \ge 0.
 \end{aligned}
 \end{equation} 
 For any smooth positive function $u$, we define $Tu$ as the unique solution of
\begin{equation} \label{defiT}
\triangle_g Tu + K Tu = F(\cdot,u) + Ku .
\end{equation}
As a first remark, for any two positive functions $u$ and $v$ in the range 
\[ \min_M w \le u,v \le \max_M \psi \]
we have: 
\[ \big( \triangle_g + K \big)(Tu - Tv)(x) = F(x,u) - F(x,v) + K \big( u(x) - v(x) \big).\]
Then, by the strong maximum principle, we obtain that
\begin{equation} \label{croissance}
Tu < Tv \textrm{ as long as } u \le v ~\textrm{and } u \not \equiv v.
\end{equation} 
The iterative sub and super solution method applied in the range $w \le \vp \le \psi$ and starting from the strict sub-solution $w$ provides a sequence $v_n = T^n w$ which is non decreasing by the maximum principle and converges to a fixed-point of $T$, that is to say a solution of \eqref{ELa} (see \cite{Sattinger} for more details). We shall call this solution $\vp(a)$:
\begin{equation} \label{defsol}
\vp(a) = \lim_{n \to \infty} T^n w.
\end{equation}
By standard elliptic theory, $\vp(a)$ is smooth. Note in passing that all the above arguments still work if we only assume that $a$ is continuous, but in this case $\vp(a)$ constructed as in \eqref{defsol} will only be of class $C^{1,\alpha}$ for any $0 < \alpha < 1$.

\medskip
Now we show that $\vp(a)$ does not depend on $\psi$ and on $w$. First, $\vp(a)$ as in \eqref{defsol} does not depend on $\psi$. We let $\psi_1$ and $\psi_2$ be two solutions of \eqref{ELa}. We let $K_i$, $i=1,2$ be positive constants satisfying \eqref{propK} in $[ \min_M w ; \max_M \psi_i]$, $T_i$ be the operator defined as in \eqref{defiT} and $\vp_i$ the associated solution as in \eqref{defsol}. Since $(T_1^n w)$ is non decreasing there holds $\vp_1 \ge w$. If we assume for instance that $\max_M \psi_1 \le \max_M \psi_2$ then $\vp_1 \in [ \min_M w ; \max_M \psi_2]$ and thus, by \eqref{defsol} and the maximum principle there holds $\vp_2 \le \vp_1$ 
since $T_2(\vp_1) = \vp_1$. But then $\vp_2$ is a solution of \eqref{ELa} with $\min_M w \le \vp_2 \le \max_M \psi_1$ and thus, once again by the maximum principle, $\vp_1 \le \vp_2$. This proves that $\vp(a)$ does not depend on $\psi$. 
Now we prove that $\vp(a)$ does not depend on the strict subsolution $w$, provided that $w$ is less than any positive solution of \eqref{ELa}. Indeed, for any $\psi$ solution of \eqref{ELa}, if $w_1$ and $w_2$ are 
two such subsolutions, and $\vp_1$ and $\vp_2$ are the associated solutions as in \eqref{defsol}, then there holds $w_1 \le \vp_2$ and $w_2 \le \vp_1$. We conclude once again with the maximum principle that 
$\vp_1 \le \vp_2$ and $\vp_2 \le \vp_1$. 

\medskip
By the definition of $\vp(a)$ in \eqref{defsol}, and what we just proved, 
for any $\psi$ solution of \eqref{ELa} there holds that $w < \vp(a) \le \psi$ where $w$ is a subsolution that is less than any solution of \eqref{ELa}. With \eqref{croissance} we obtain the desired property:
\begin{equation} \label{minimal}
\vp(a) < \psi \textrm{ or } \vp(a) \equiv \psi.
 \end{equation}
The stability of $\vp(a)$ is a consequence of the minimality of $\vp(a)$. We denote by $\lambda_0$ the first eigenvalue of the linearized operator of equation \eqref{ELa} at $\vp(a)$. The stability of $\vp(a)$ as stated in Proposition \ref{solmin} amounts to say that $\lambda_0 \ge 0$. Assume by contradiction that $\lambda_0 < 0$ and denote by  $\psi_0$ the associated positive eigenvector. Let $w$ be a subsolution that is less than any solution of \eqref{ELa}. Let $\vp_\delta = \vp(a) - \delta \psi_0$ for any positive $\delta$. For $\delta > 0$ small enough one has
\[ w <  \vp_\delta < \vp(a) \]
and a straightforward calculation shows that
\[ \triangle_g  \vp_\delta +h \vp_\delta - f \vp_\delta^{2^*-1} - \frac{a}{\vp_\delta^{2^*+1}} =  - \delta \lambda_0 \psi_0 + o(\delta) > 0\] 
so that $\vp_\delta$ is a strict supersolution of \eqref{ELa} satisfying $w < \vp_\delta < \vp(a)$ for $\delta$ small enough. By the iterative sub and super solution method we then get a solution
$\psi$ of $(EL_a)$ such that $w < \psi < \varphi_\delta$, and this is in contradiction with \eqref{minimal}.

\medskip
Finally, if $a_1 \le a_2$ are nonnegative nonzero functions on $M$, $\vp(a_2)$ is a super solution of equation \eqref{ELa} with $a = a_1$. By the minimality of $\vp(a_1)$ we then have $\vp(a_1) \le \vp(a_2)$.
 \end{proof}

\section{The fixed-point method: proof of Theorem \ref{Th}} \label{pingpong} 

In this section we prove Theorem \ref{Th} using the results obtained in the previous section. Before we start let us recall some basic elliptic properties of the operator $\triangle_{g,conf}$ that can be found for instance in Isenberg and \`{O} Murchadha \cite{IsMu}. The following 
proposition can be found in  \cite{IsMu}.

\begin{prop} \label{estimee}
Let $(M,g)$ be a closed compact Riemannian manifold of dimension $ n\ge3$ such that $g$ has no conformal Killing fields. Let $X$ be a smooth vector field in $M$. Then there exists a unique solution
$W$  of
\[ \triangle_{g,conf} W = X .\]
Also, there exists a constant $C_0>0$ that depends only on $n$ and $g$ such that
\[ \Vert W \Vert_{C^{1,\alpha}} \le C_0 \Vert X \Vert_\infty\]
for some positive $\alpha$. As a straightforward consequence, there exists a constant $C_1$ still depending only on $n$ and $g$ such that 
$\Vert \mathcal{L}_g W \Vert_\infty \le C_1 \Vert X \Vert_\infty$.
\end{prop}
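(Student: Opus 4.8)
The plan is to regard $\LC = \dive\circ\Lg$ as a genuine second‑order, formally self‑adjoint, elliptic operator acting on vector fields over the closed manifold $(M,g)$, and then to read off existence, uniqueness and the estimate from Fredholm theory together with standard $L^p$ elliptic estimates. First I would verify ellipticity: for a nonzero covector $\xi$ the principal symbol of $\LC$ sends a vector $v$ to a nonzero multiple of $|\xi|_g^2\, v + \big(1 - \frac{2}{n}\big)\langle \xi, v\rangle_g\, \xi^{\sharp}$, which acts as $|\xi|_g^2$ on $\xi^{\perp}$ and as $\big(2 - \frac{2}{n}\big)|\xi|_g^2$ on $\mathbb{R}\xi^{\sharp}$, hence is invertible for every $\xi \neq 0$ since $n \ge 3$. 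Next, integrating by parts (recall $\dive$ is the usual divergence, so $\Lap = -\dive \nabla$) one gets, for all $W, V \in H^1(M)$,
\[ \int_M \langle \LC W, V\rangle_g\, dv_g = -\frac{1}{2}\int_M \langle \Lg W, \Lg V\rangle_g\, dv_g, \]
which shows that $\LC$ is formally self‑adjoint and that $W \in H^1(M)$ lies in $\ker \LC$ exactly when $\Lg W = 0$, i.e. exactly when $W$ is a conformal Killing vector field; by hypothesis this forces $W \equiv 0$.

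With ellipticity, self‑adjointness and triviality of the kernel in hand, the classical elliptic theory for systems on a compact manifold gives that $\LC$ is Fredholm of index zero, hence an isomorphism, from $W^{2,p}(M)$ onto $L^p(M)$ (and from $C^{2,\alpha}(M)$ onto $C^{0,\alpha}(M)$) for every $1 < p < \infty$ and $0 < \alpha < 1$. In particular, for smooth $X$ there is a unique $W$ solving $\LC W = X$, and $W$ is smooth by the usual elliptic bootstrap.

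For the quantitative estimate I would start from the a priori inequality $\|W\|_{W^{2,p}} \le C\big(\|\LC W\|_{L^p} + \|W\|_{L^p}\big)$ with $C = C(n,g,p)$ and absorb the lower‑order term: since $\LC$ is injective, either the open mapping theorem for the isomorphism $W^{2,p}\to L^p$ or the standard compactness–contradiction argument (a normalized sequence violating the bound would converge to a nonzero kernel element) yields $\|W\|_{W^{2,p}} \le C(n,g,p)\,\|X\|_{L^p}$. Taking any fixed $p > n$, using $\|X\|_{L^p} \le \mathrm{vol}(M,g)^{1/p}\|X\|_\infty$ and the Morrey embedding $W^{2,p}(M) \hookrightarrow C^{1,\alpha}(M)$ with $\alpha = 1 - n/p$, this gives $\|W\|_{C^{1,\alpha}} \le C_0(n,g)\,\|X\|_\infty$. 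Finally, $\Lg W$ is a fixed linear combination of first derivatives of $W$ by \eqref{conflie}, so $\|\Lg W\|_\infty \le C\|W\|_{C^1} \le C_1(n,g)\,\|X\|_\infty$. The only point requiring any care is the absorption step — passing from the a priori estimate carrying the $\|W\|_{L^p}$ term to a clean bound — which is precisely where the absence of conformal Killing fields enters; everything else is routine, and all constants depend only on $n$ and $g$ because $g$ is fixed throughout.
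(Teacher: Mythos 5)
Your argument is correct and complete: the symbol computation, the integration-by-parts identity $\int_M \langle \triangle_{g,conf}W,V\rangle\,dv_g = -\tfrac12\int_M\langle \mathcal{L}_gW,\mathcal{L}_gV\rangle\,dv_g$ (which identifies the kernel with conformal Killing fields), the Fredholm/index-zero step, and the compactness–contradiction absorption of the $\Vert W\Vert_{L^p}$ term followed by Morrey embedding are exactly the standard route. The paper itself offers no proof of this proposition — it simply cites Isenberg and \`O Murchadha — and your write-up is precisely the argument that reference supplies, with the one genuinely non-routine point (where the no-conformal-Killing-field hypothesis enters) correctly isolated.
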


The proof of Theorem \ref{Th} is obtained through a standard fixed-point argument. We develop the proof in what follows.

\paragraph{Obtaining a first estimate.}

Let $(M,g)$ be a closed compact Riemannian manifold of dimension $n \ge 3$ of positive Yamabe type such that $g$ has no conformal Killing vector fields. Let $V$ be a smooth nonnegative function in $\RR$, non everywhere zero and $\psi$ be a smooth function in $M$ such that $\triangle_g + \mathcal{R}_\psi$ is coercive. Also let $\pi$ and $U$ be such that $(\pi,U)$ is not everywhere zero in $M$, and let $\tau$ be a smooth function in $M$ such that 
\[\frac{n-1}{n} \tau^2(x) \le 2  V \left( \psi(x) \right)  \textrm{ for all } x \in M ,\] 
the equality being strict somewhere. This means, with the notations of \eqref{expressions}, that $\mathcal{B}_{\tau, \psi,V}$ is non-negative in $M$ and positive somewhere. In order to define the mapping to which 
we are going to apply Schauder's fixed-point Theorem, we need to get some important preliminary estimate based on  \eqref{controle}.
Let $W$ be a smooth vector field in $M$. We let
\begin{equation} \label{C(n,M)}
C(n,g,V, \psi) =  
C(n) V_g^{-1}  \left( 2 c_n S_{\mathcal{R}_\psi} \max_{x\in M} V(\psi(x)) \right)^{1-n} \left( \int_M \mathcal{R}_\psi dv_g \right)^{- \frac{2^*}{2}}
\end{equation}
where $V_g$ is the volume of $(M,g)$, $\mathcal{R}_\psi$ is as in \eqref{expressions}, $S_{\mathcal{R}_\psi}$ is as in \eqref{Sh} and $C(n) = \frac{1}{n-2} \left( 2(n-1)\right)^{-\frac{2^*}{2}}$. 
By \eqref{Sh} the constant $S_{\mathcal{R}_\psi}$ only depends on $n,g$ and on the coercivity constant of $\triangle_g + \mathcal{R}_\psi$, hence on $n,g$ and $ \nabla\psi $. We consider the equation
\begin{equation} \label{vp0}
\triangle_g \vp + \mathcal{R}_\psi \vp = \mathcal{B}_{\tau, \psi,V} \vp^{2^*-1} + C(n,g ,V,\psi)\vp^{-2^*-1}
\end{equation}
By the result in Hebey-Pacard-Pollack \cite{HePaPo}, and since by \eqref{expressions} we have that 
$2c_nV \ge \mathcal{B}_{\tau, \psi,V}$, 
\eqref{vp0} has a smooth positive solution. 
Since we assumed 
 $\triangle_g + \mathcal{R}_\psi$ coercive, using Proposition \ref{solmin} we can let $\vp_m $ be the minimal solution of \eqref{vp0} 
and let
\begin{equation} \label{defN1}
N_m =  \Vert \vp_m \Vert_\infty.
\end{equation} 
Let $L^\infty_+(M)$ be the set of non negative bounded functions in $M$. Regarding the vector equation \eqref{M}, since we have assumed that $g$ has no Killing vector 
fields, for any $\eta \in L^\infty_+(M)$ we can use Proposition \ref{estimee} to let $W(\eta)$ be the unique vector field solution of
\begin{equation} \label{Weta}
 \triangle_{g,conf} W(\eta) = \frac{n-1}{n}\eta^{2^*} \nabla\tau - \pi\nabla \psi . 
 \end{equation}
Proposition \ref{estimee} shows that 
\begin{equation} \label{estimeeW}
\Vert \mathcal{L}_g W(\eta) \Vert_\infty \le C_1 \left( \Vert \nabla \tau \Vert_\infty  \Vert \eta \Vert_{\infty}^{2^*}  + \Vert \pi \Vert_\infty \Vert \nabla \psi \Vert_\infty \right).
\end{equation}
By \eqref{C(n,M)}, \eqref{vp0} and \eqref{defN1}, $N_m$ depends only on $n,g, S_{\mathcal{R}_\psi}$ and $\max_M V(\psi)$. Using \eqref{expressions} and 
\eqref{estimeeW} it is easily seen that there exists a positive constant $\ve(n,g,V, \psi)$ depending only on $n,g, S_{\mathcal{R}_\psi}$, and $\max_M V(\psi)$, such that whenever
\begin{equation} \label{controlebis}
\Vert \nabla \tau \Vert_\infty + \Vert \pi \Vert_\infty +  \Vert U \Vert_\infty \le \ve(n,g,V,\psi)~,
\end{equation}
then 
\begin{equation} \label{estimeeLgW}
\left| \left|   \mathcal{A}_{\pi, U}(W(\eta)) \right| \right|_\infty 
< C(n,g,V,\psi)
\end{equation}
for any $\Vert \eta \Vert_\infty \le N_m$, where $\mathcal{A}_{\pi, U}(W(\eta))$ is as in \eqref{expressions}. After a straightforward computation, using \eqref{expressions} and \eqref{estimeeW}, one sees that, 
in order to obtain \eqref{estimeeLgW}, it is enough to assume that 
\begin{equation} \label{epsng}
\ve(n,g,V, \psi)^2 <  \frac{C(n,g,V,\psi)}{\left(3 + 4 C_1^2 (N_m^{2 \cdot 2^*} + \| \nabla \psi \|_\infty^2)\right)c_n}
\end{equation}
where $c_n$ is as in \eqref{expressions}, $C_1$ is obtained in Proposition \ref{estimee}, $C(n,g,V, \psi)$ is as in \eqref{C(n,M)} and $N_m$ is as in \eqref{defN1}. 
Now we construct the map $\mathcal{T}$ to which we are going to apply Schauder's fixed point theorem.

\paragraph{Definition of the mapping $\mathcal{T}$.} 

From now on, we will always assume that \eqref{controlebis} is satisfied, so that \eqref{estimeeLgW} holds true. For any positive $N$, we define
\begin{equation} \label{bn}
B_{N} = \{ \eta \in L_+^\infty(M), \Vert \eta \Vert_{\infty} \le N \}.
\end{equation} 
An easy claim is that 
for any vector field $W$ of class $C^1$ in $M$, $\mathcal{A}_{\pi,U}(W)$ as in \eqref{expressions} is continuous, non-negative and positive somewhere in $M$.
Obviously, $\mathcal{A}_{\pi,U}(W)$ is continuous and non-negative, and we just need to prove that it is positive somewhere. 
By \eqref{expressions} this is automatically true if $\pi$ is not everywhere zero. In case $\pi \equiv 0$ there might be that 
there exists a $C^1$ vector field in $M$ such that $U + \Lg W = 0$ everywhere. Taking the divergence of this equality in the weak sense yields, since $U$ is divergence-free:
\[ \triangle_{g,conf} W = 0  \]
in the weak sense. Since $g$ has no Killing fields this implies $W = 0$ and hence $U = 0$, which is impossible since we assumed $(\pi,U)$ non everywhere zero. 
Now by \eqref{expressions} it is easily seen that 
\begin{equation} \label{ineqexistence}
C(n,g,V,\psi) \le C(n)  
 V_g^{-1} \left( S_{\mathcal{R}_\psi} \max_{x\in M} \mathcal{B}_{\tau, \psi,V}(x)  \right)^{1-n} \left( \int_M \mathcal{R}_\psi dv_g \right)^{- \frac{2^*}{2}}
\end{equation}
where we used the notations of \eqref{expressions} and where $C(n,g,V,\psi)$ is as in \eqref{C(n,M)}. 
Now we consider the equation
\begin{equation} \label{ELeta}
\triangle_g \vp + \mathcal{R}_\psi \vp = \mathcal{B}_{\tau, \psi, V} \vp^{2^*-1} + \mathcal{A}_{\pi,U} \left( W(\eta) \right) \vp^{-2^*-1}~.
\end{equation}
We claim that \eqref{ELeta} has a smooth positive solution for any $\eta \in B_{N_m}$. We just proved that $\mathcal{A}_{\pi,U}(W(\eta))$ is never zero. 
Thus we can construct subsolutions of \eqref{ELeta} as small as we want as we did in Section \ref{solumini}, see \eqref{soussol}. 
On the other hand, by \eqref{estimeeLgW} and \eqref{ineqexistence}, there holds that for any $\eta \in B_{N_m}$ and any $\delta > 0$ sufficiently small,
\[ \mathcal{A}_{\pi,U}(W(\eta)) + \delta \le C(n) 
 V_g^{-1} \left( S_{\mathcal{R}_\psi} \max_{x\in M} \mathcal{B}_{\tau, \psi,V}(x)  \right)^{1-n} \left( \int_M \mathcal{R}_\psi dv_g \right)^{- \frac{2^*}{2}}
~. \]
Then the existence result by Hebey-Pacard-Pollack, namely Theorem $3.1$ and equation $(3.3)$ in \cite{HePaPo}, applies to \eqref{ELeta} when replacing $\mathcal{A}_{\pi,U}(W(\eta))$ by $\mathcal{A}_{\pi,U}(W(\eta)) + \delta$ and provides us with a strict super solution of \eqref{ELeta}.  Since $\mathcal{A}_{\pi,U}(W(\eta))$ is nonzero for all $\eta \in B_{N_m}$, and since it is smooth, Proposition \ref{solmin} shows that \eqref{ELeta} 
possesses a minimal smooth positive solution $\vp(\mathcal{A}_{\pi,U}(W(\eta)))$, where we use the same notations as in Proposition \ref{solmin}. The following map:  
\begin{equation} \label{defT}
\mathcal{T}:  \eta \mapsto \mathcal{T}(\eta) = \vp \left( \mathcal{A}_{\pi,U}(W(\eta)) \right)
\end{equation} 
is thus well-defined in $B_{N_m} $. It is clear that a fixed point of $\mathcal{T}$ is a solution of the constraint system. 
As a consequence of the monotonicity property of the minimal solution in Proposition \ref{solmin} along with \eqref{estimeeLgW} and the very definition of $N_m$ in \eqref{defN1} we obtain that, for any $\eta \in B_{N_m}$, 
\begin{equation} \label{boulestable}
0 <  \mathcal{T}(\eta) \le N_m.
\end{equation}
Hence $B_{N_m}$ is stable under $\mathcal{T}$ and $\mathcal{T}$ maps $B_{N_m}$ into itself. Now we prove that $\mathcal{T}$ is continuous in $B_{N_m}$.

\paragraph{$\cal{T}$ is continuous in $B_{N_m}$.} \label{continuiteT}

First we  claim that there exists a positive real number $\delta_0$ such that for any $\eta \in B_{N_m}$, and any $x \in M$,
\begin{equation} \label{infunif}
\mathcal{T}(\eta)(x) \ge \delta_0.
\end{equation}
To prove this claim we pick a sequence $(\eta_k)_k$ in $B_{N_m}$ and show that there holds
\begin{equation} \label{infunif2}
\liminf_{k \to + \infty} \min_{x \in M} \mathcal{T}(\eta_k)(x) > 0. 
\end{equation}
We consider the associated $W(\eta_k)$ as in \eqref{Weta}. By Proposition \ref{estimee}, if we choose $\ve(n,g,V,\psi)$ in (4.6) such that 
\[ \ve(n,g,V,\psi) < \frac{N_m^{2^\star}}{\frac{n-1}{n}N_m^{2^\star}+\Vert\nabla\psi\Vert_\infty}~,\]
then there holds 
\begin{equation} \label{continuiteW}
\Vert W(\eta_k) \Vert_{C^{1,\alpha}} \le  C_0 N_m^{2^*} 
\end{equation}
so that, up to a subsequence, we can assume that $W(\eta_k)$ converges to some $W_0$ in the $C^{1,\alpha}(M)$-topology for some $0 < \alpha < 1$. As noticed in a previous remark right after defining $\mathcal{T}$,  $\mathcal{A}_{\pi,U}(W_0)$ is non-negative and positive somewhere in $M$. We denote by $x_0$ its maximum point and choose $0< r < i_g(M)$ so as to have 
\[ \mathcal{A}_{\pi,U}(W_0)(x) \ge \frac{1}{2} \mathcal{A}_{\pi,U}(W_0)(x_0)\] 
for all $x \in B_{x_0}(2 r)$. Let $\lambda$ be a smooth nonnegative function, compactly supported in $B_{x_0}(2 r)$ and equal to $1$ in  $B_{x_0}(r)$.  Since $W(\eta_k) \xrightarrow{C^{1,\alpha}} W_0$ as $k$ goes to infinity one has,
for $k$ large enough, in $M$, that
\[ \mathcal{A}_{\pi,U}(W(\eta_k)) \ge \frac{1}{2} \lambda \mathcal{A}_{\pi,U}(W_0). \]
The monotonicity property in Proposition \ref{solmin} and the definition of $\mathcal{T}$ in \eqref{defT} thus show that
\begin{equation*} 
\mathcal{T}(\eta_k) \ge \vp \left( \frac{1}{2} \lambda \mathcal{A}_{\pi,U}(W_0) \right)~,
\end{equation*} 
where the right-hand side is a smooth positive function in $M$, which shows \eqref{infunif2}. 

\medskip
Now we prove the continuity of $\mathcal{T}$. We let $\eta_k \in B_{N_m}$ be a sequence of nonnegative functions in $M$ converging uniformly to some $\eta_0 \in B_{N_m}$. There holds that 
\begin{equation*}
\triangle_g \mathcal{T}(\eta_k) + \mathcal{R}_\psi\mathcal{T}(\eta_k) = \mathcal{B}_{\tau, \psi, V} \mathcal{T}(\eta_k)^{2^*-1} 
+ \mathcal{A}_{\pi,U} \left( W(\eta_k) \right)\mathcal{T}(\eta_k)^{-2^*-1}
\end{equation*}
for all $k$. 
By \eqref{boulestable} and \eqref{infunif}, standard elliptic theory shows that $\mathcal{T}(\eta_k)$ converges in the $C^2(M)$-topology, up to a subsequence, to some $\mathcal{T}_0$ solution of
\begin{equation} \label{eqlim}
 \triangle_g \mathcal{T}_0 + \mathcal{R}_\psi \mathcal{T}_0  = \mathcal{B}_{\tau,\psi,V} \mathcal{T}_0^{2^*-1} + \mathcal{A}_{\pi,U}(W(\eta_0)) \mathcal{T}_0^{-2^*-1}. 
 \end{equation}
There holds then by Proposition \ref{solmin}: either  $\mathcal{T}_0 = \mathcal{T}(\eta_0)$ or $\mathcal{T}_0 >  \mathcal{T}(\eta_0)$ everywhere. We proceed by contradiction and assume that $ \mathcal{T}_0 > \mathcal{T}(\eta_0)$. We then define for any $t \in [0;1]$
\[ m(t) =  I_0 \big( t\mathcal{T}(\eta_0) + (1-t) \mathcal{T}_0 \big)~, \]
where $I_0$ is defined for any positive $\eta \in H^1(M)$ and is the energy associated to \eqref{eqlim}:
\begin{equation} \label{iw}
I_0(\eta) = \frac{1}{2}\int_M \left( |Ê\nabla \eta |^2 + \mathcal{R}_\psi \eta^2 \right) dv_g - \frac{1}{2^*} \int_M \mathcal{B}_{\tau, \psi,V} \eta^{2^*} dv_g + \frac{1}{2^*} \int_M \mathcal{A}_{\pi,U}(W(\eta_0)) \eta^{-2^*} dv_g .
\end{equation}
By proposition \ref{solmin} each $\mathcal{T}(\eta_k)$ is a stable solution, and thus $\mathcal{T}_0$ is stable. Hence $m''(0) \ge 0$. Using \eqref{iw} we can compute $m^{(3)}(t)$ for any $t \in [0,1]$, where 
$m^{(3)}$ is the third derivative of $m$. There holds
\[ \begin{aligned}
m^{(3)}(t)   = -(2^*-1)(2^*-2) \int_M \mathcal{B}_{\tau, \psi,V} \big(t \mathcal{T}(\eta_0) + (1-t) \mathcal{T}_0 \big)^{2^*-3}(\mathcal{T}(\eta_0) - \mathcal{T}_0)^3 dv_g \\
- (2^*+1)(2^*+2) \int_M \mathcal{A}_{\pi,U}(W(\eta_0)) \big( t \mathcal{T}(\eta_0) + (1-t) \mathcal{T}_0 \big)^{-2^*-3} (\mathcal{T}(\eta_0) - \mathcal{T}_0)^3 dv_g .\\
\end{aligned} \]
Since $\mathcal{T}_0 > \mathcal{T}(\eta_0)$ and $\mathcal{B}_{\tau,\psi,V}$ is nonnegative nonzero, $m^{(3)}(t)$  is positive for all $t \in (0,1)$. Hence $m''$ is a positive function of $t$ for $0 < t \le 1$ and $m'$ is increasing in $(0,1)$. But this is impossible since both $\mathcal{T}_0$ and $\mathcal{T}(\eta_0)$ are solutions of \eqref{eqlim} and there thus holds $m'(0) = m'(1) = 0$. Hence $\mathcal{T}_0 = \mathcal{T}(\eta_0)$ and in particular $\mathcal{T}$ is continuous. In order to apply the Schauder's fixed point theorem, it remains to prove the precompactness of 
$\mathcal{T}(B_{N_m})$, which itself follows from the compactness of $\mathcal{T}$.

\paragraph{Compactness of $\mathcal{T}$ and conclusion.}

Clearly, $B_{N_m}$ is a closed convex set in $L^\infty_+(M)$. It remains to show that $\mathcal{T}(B_{N_m})$ is compact to conclude. By \eqref{estimeeLgW}, \eqref{boulestable} and  \eqref{infunif}, for any $\eta \in B_{N_m}$, $\mathcal{T}(\eta)$ satisfies $\delta_0 \le \mathcal{T}(\eta) \le N_m$ and:
\[ \triangle_g \mathcal{T}(\eta) + \mathcal{R}_\psi \mathcal{T}(\eta) \le \Vert \mathcal{B}_{\tau, \psi,V} \Vert_\infty N_m^{2^*-1} + \delta_0^{- 2^*-1} C(n,g,V,\psi)~, \]
where $C(n,g,V,\psi)$ is as in \eqref{C(n,M)}. By standard elliptic theory $\mathcal{T}(B_{N_m})$ is thus bounded in $C^1(M)$. By the compactness of the embedding $C^1(M) \subset L^\infty(M)$
we then get that $\mathcal{T}(B_{N_m})$ is a compact set of $L^\infty_+(M)$. Applying Schauder's fixed point theorem yields the existence of a fixed-point of $\mathcal{T}$ on $B_{N_m}$, i.e. a solution of the constraint system, and concludes the proof of Theorem \ref{Th}.

\normalsize

\bibliographystyle{abbrv}
\bibliography{biblio}

\bigskip

\scriptsize
\noindent BRUNO PREMOSELLI,
D\'EPARTEMENT DE MATH\'EMATIQUES, UNIVERSIT\'E DE CERGY-PONTOISE, SITE DE SAINT-MARTIN, $2$ AVENUE ADOLPHE CHAUVIN, $95302$, CERGY-PONTOISE CEDEX, FRANCE AND
D\'EPARTEMENT DE MATH\'EMATIQUES - UMPA, ECOLE NORMALE SUP\'ERIEURE DE LYON, $46$ ALLEE D'ITALIE, $69364$ LYON CEDEX $07$, FRANCE.

\medskip
\textit{E-mail address}: bruno.premoselli@u-cergy.fr

\end{document}